\documentclass[11pt]{amsart}

\usepackage{amssymb,amsmath}
\usepackage{bbm}
\usepackage{a4wide}

\newtheorem{theorem}{Theorem}
\newtheorem{prop}{Proposition}
\newtheorem{lemma}{Lemma}
\newtheorem{corollary}{Corollary}

\newcommand{\ts}{\hspace{0.5pt}}

\newcommand{\RR}{\mathbb{R}\ts}
\newcommand{\QQ}{\mathbb{Q}\ts}
\newcommand{\ZZ}{\mathbb{Z}}
\newcommand{\TT}{\mathbb{T}}
\newcommand{\NN}{\mathbb{N}}
\newcommand{\sss}{\mathbb{S}}

\newcommand{\one}{\mathbbm{1}}
\newcommand{\ep}{\text{\raisebox{1.5pt}{$\scriptstyle \ts\bigwedge\!$}}}

\DeclareMathOperator{\diag}{diag}
\DeclareMathOperator{\sgn}{sgn}
\DeclareMathOperator{\trace}{tr}

\DeclareMathOperator{\fix}{Fix}
\DeclareMathOperator{\Mat}{Mat}
\DeclareMathOperator{\Lie}{Lie}

\begin{document}

\vspace*{-5mm}

\title[Dynamical zeta function]
{A note on the dynamical zeta function \\[2mm]
of general toral endomorphisms}

\author{Michael Baake}
\author{Eike Lau}
\author{Vytautas Paskunas}
\address{Fakult\"at f\"ur Mathematik, Universit\"at Bielefeld, \newline
\hspace*{\parindent}Postfach 100131, 33501 Bielefeld, Germany}
\email{$\{$mbaake,lau,paskunas$\}$@math.uni-bielefeld.de}

\begin{abstract} 
  It is well-known that the Artin-Mazur dynamical zeta function of a
  hyperbolic or quasi-hyperbolic toral automorphism is a rational
  function, which can be calculated in terms of the eigenvalues of the
  corresponding integer matrix.  We give an elementary proof of this
  fact that extends to the case of general toral endomorphisms without
  change. The result is a closed formula that can be calculated by
  integer arithmetic only. We also address the functional equation and
  the relation between the Artin-Mazur and Lefschetz zeta functions.
\end{abstract}

\maketitle

\section{Introduction}

Any $d$-dimensional toral endomorphism is represented by an integer
matrix, $M \in \Mat(d,\ZZ)$, with action mod $1$ on the $d$-torus
$\TT^{d}\simeq\RR^{d}/\ZZ^{d}$; see \cite{AP} and \cite[Ch.~1.8]{KH}
for background and \cite[Ex.~1.16]{AA} for an illustration.  Important
aspects of the dynamical system $(\TT^{d},M)$ are related to its
periodic orbits and their distribution over $\TT^{d}$; compare
\cite{DEI,W}. The Artin-Mazur \cite{AM} dynamical zeta function
provides a generating function for the orbit counts that is
interesting both from an arithmetic and from a topological point of
view \cite{Fel,R}.  The latter was also Smale's approach \cite{Smale},
who related the Artin-Mazur and Lefschetz zeta functions of a
\emph{hyperbolic} toral automorphism and calculated both in terms of
eigenvalues.

In this note, we explain a different approach via elementary geometry
and linear algebra, which bypasses more advanced topological methods
as well as the need to calculate eigenvalues.  A key observation is
that all arguments apply to general toral endomorphisms without
additional effort.  We also treat the connection between the
Artin-Mazur and the Lefschetz zeta function and their functional
equations.  As we have learned along the way, most arguments we use
appear already in the literature, notably in \cite{Fel}, but at least
their combination seems to be new. Also, we make several steps
explicit to facilitate their computational use.

For $M\in \Mat (d,\ZZ)$ and $m\geq 1$, let $a_m$ be the number of
\emph{isolated} fixed points in $\TT^d$ of the $m$-th iterate $M^m$.
The starting point of our considerations is the identity
\begin{equation} \label{fixcount}
     a^{}_{m} = \lvert \ts \det(\one - M^m) \rvert \ts .
\end{equation}
This formula is well-known \cite{W,BHP} when no eigenvalue of $M^m$ is
$1$; it then follows from counting the number of points of $\ZZ^{d}$
in a fundamental domain of the lattice $(\one - M^{m})\ts
\ZZ^d$. Otherwise, \eqref{fixcount} is true because both sides are
zero.  Indeed, since the fixed points of $M^m$ form a closed subgroup
of $\TT^{d}$, they are either all isolated, or they form entire
subtori of positive dimension; see \cite[Appendix]{BHP} for a detailed
discussion of the subtorus case. Incidentally, when no eigenvalue of
$M^m$ is $1$, $a^{}_{m}$ is also the Reidemeister number of a toral
endomorphism, see \cite[Thm.~22 and p.~33]{Fel}, while, in general,
$a^{}_m$ is its Nielsen number.

Following \cite {AM}, the Artin-Mazur zeta function of a general $M
\in \Mat(d,\ZZ)$ is defined as
\begin{equation}\label{zeta-1} 
   \zeta^{}_{M} (z) := \ts \exp 
   \Bigl(\sum_{m=1}^{\infty} \frac{a^{}_{m}}{m} z^m \Bigr) =
   \prod_{m=1}^{\infty} (1-z^m)^{-c_m} .
\end{equation}
Here, the exponents $c^{}_m$ of the Euler product representation are
well-defined integers, see Proposition \ref{prop:euler} below.  An
explicit representation of $\zeta^{}_{M} (z)$ as a rational function
is given below in Theorem~\ref{thm:main}.  We stress that at least for
hyperbolic or quasihyperbolic toral endomorphisms, this result is
well-known by \cite{Smale} or \cite{Fel}; our focus is the elementary
method.

A matrix $M$ is called \emph{hyperbolic} when it has no eigenvalue on
the unit circle $\sss^{1}$.  Such toral automorphisms are expansive
\cite[p.~143]{W}.  Note that $M$ may possess eigenvalues on $\sss^{1}$
other than roots of unity (for instance, if one eigenvalue of $M$ is a
Salem number; see \cite{Wad,BRtorus} for examples). Integer matrices
without roots of unity in their spectrum constitute the
\emph{quasihyperbolic} cases, compare \cite{Wad} and references
therein, where formula \eqref{fixcount} still counts all fixed points.
For quasihyperbolic matrices $M$, the exponents $c_m$ are the cycle
numbers, which are related to the fixed point counts via
\begin{equation}\label{mobius}
    a^{}_m = \sum_{\ell | m} \ell\ts c^{}_{\ell}
    \quad \text{and} \quad
    c^{}_{m} = \frac{1}{m} \sum_{\ell | m} 
    \mu\bigl(\frac{m}{\ell}\bigr)\, a^{}_{\ell} \, .
\end{equation}
This follows from a standard application of M{\"o}bius inversion;
compare \cite{PW,BRW}. 

When roots of unity are among the eigenvalues of $M$, the Euler
product still exists, with the same relation between the counts of
(isolated) fixed points and the exponents $c^{}_m$, though the latter
can now be negative. Let us briefly illustrate this phenomenon in one
dimension.  Endomorphisms of $\ts\TT^{1}\!\simeq \sss^{1}$ are
represented by multiplication (mod $1$) with an integer $n$.  The
dynamical zeta function reads $\zeta^{}_{0}(z)=1/(1-z)$ and
\begin{equation} \label{dim-one}
   \zeta^{}_{n} (z) = 
    \frac{1-\sgn(n)\ts z}{1-\lvert n \rvert \ts z} \ts 
\end{equation}
for $n\neq 0$, due to our Theorem~\ref{thm:main} below (or a simple
direct calculation).  For $n=-1$, we get $\zeta^{}_{-1} (z) =
(1-z^2)/(1-z)^2$, thus $c^{}_{1} = 2$ and $c^{}_{2}=-1$, while
$c_{m}=0$ for all $m\ge 3$.  The negative $c^{}_2$ corresponds to the
fact that the two isolated fixed points of the map fail to be
\emph{isolated} for any even iterate, which is the identity.

\smallskip 
Finally, let us note that our arguments extend to the case of
nilmanifolds $X=G/\varGamma$ considered in \cite[Sec.~2.6]{Fel}, where
$G$ is a simply connected nilpotent Lie group and $\varGamma$ a
discrete subgroup such that $X$ is compact.  Namely, any endomorphism
$\varphi$ of $\varGamma$ extends to an endomorphism $\tilde\varphi$ of
$X$, the isolated fixed points of which are counted by $\big\lvert
\det \bigl(\one-\Lie (\tilde{\varphi}) \bigr)\big \rvert$, analogously
to \eqref{fixcount}.

\section{A related zeta function}

Let us start with the numbers $\widetilde{a}_{m} := \det(\one - M^m)$,
which can be viewed as signed fixed point counts, and the
corresponding zeta function
\begin{equation} \label{til-series}
   \widetilde{\zeta}^{}_{M} (z) = \exp \Bigl( \sum_{m = 1}^{\infty}
   \frac{\widetilde{a}_{m}}{m} z^m \Bigr).
\end{equation}
In Section~\ref{sec:lef}, we will see that this is actually a
Lefschetz zeta function, see Eq.~\eqref{rel-equals-lef} below.

For $A\in\Mat(d,\RR)$, let $\ep^k(A)$ be the induced linear map on the
exterior power $\ep^{k} (\RR^d)$.  In terms of the standard basis of
that space, $\ep^k(A)$ is represented by the matrix of all minors of
$A$ of order $k$; see \cite[Ch.~1.4]{Gant} for details.  This is an
integer matrix of dimension $\binom{d}{k}$, with $\ep^0(A) = 1$,
$\ep^1(A) = A$ and $\ep^d(A) = \det (A)$.

\begin{prop}
\label{pr-zeta-torus}
   For $M\in \Mat (d,\ZZ)$, we have 
   $\; \widetilde{\zeta}^{}_{M} (z) = \prod_{k=0}^{d}
    \det \bigl( \one - z \ep^k (M) \bigr)^{(-1)^{k+1}}$.
\end{prop}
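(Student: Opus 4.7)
The plan is to take the logarithm of both sides and compare the resulting power series in $z$ coefficient by coefficient. On the left, $\log\widetilde{\zeta}^{}_{M}(z)=\sum_{m\ge 1}\widetilde{a}_{m}z^{m}/m$ by definition, so the task reduces to rewriting $\widetilde{a}_{m}=\det(\one-M^{m})$ in a form that matches $\sum_{k}(-1)^{k+1}\log\det(\one-z\ep^{k}(M))$.

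The first key ingredient is the exterior-power expansion of the characteristic polynomial. For any $A\in\Mat(d,\RR)$, one has
\begin{equation*}
   \det(\one - tA)\, = \, \sum_{k=0}^{d} (-t)^{k}\,\trace\bigl(\ep^{k}(A)\bigr),
\end{equation*}
which follows from the fact that the coefficients of the characteristic polynomial are the elementary symmetric functions of the eigenvalues, together with the fact that the eigenvalues of $\ep^{k}(A)$ are the $k$-fold products of those of $A$ (this is visible from Jordan or Schur form, and hence holds for all $A$ by density). Setting $t=1$ and $A=M^{m}$ gives
\begin{equation*}
   \widetilde{a}_{m}\, =\, \sum_{k=0}^{d}(-1)^{k}\,\trace\bigl(\ep^{k}(M^{m})\bigr)
   \,=\, \sum_{k=0}^{d}(-1)^{k}\,\trace\bigl(\ep^{k}(M)^{m}\bigr),
\end{equation*}
where the second equality uses functoriality, $\ep^{k}(M^{m})=\ep^{k}(M)^{m}$.

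The second ingredient is the formal identity
\begin{equation*}
   -\log\det(\one - zA)\,=\,\sum_{m=1}^{\infty}\frac{\trace(A^{m})}{m}\,z^{m},
\end{equation*}
valid for any square matrix $A$ over $\RR$ or $\CC$; one may derive it by passing to Jordan (or Schur) form, where both sides become $\sum_{j}-\log(1-z\lambda_{j})=\sum_{j}\sum_{m}\lambda_{j}^{m}z^{m}/m$ with $\lambda_{j}$ the eigenvalues of $A$. Applying this to $A=\ep^{k}(M)$ and substituting into the right-hand side of the claimed identity yields
\begin{equation*}
   \log\!\prod_{k=0}^{d}\det\!\bigl(\one-z\ep^{k}(M)\bigr)^{(-1)^{k+1}}
   \,=\,\sum_{k=0}^{d}(-1)^{k}\sum_{m=1}^{\infty}\frac{\trace(\ep^{k}(M)^{m})}{m}\,z^{m}
   \,=\,\sum_{m=1}^{\infty}\frac{z^{m}}{m}\sum_{k=0}^{d}(-1)^{k}\trace\bigl(\ep^{k}(M)^{m}\bigr).
\end{equation*}
By the previous paragraph, the inner sum is precisely $\widetilde{a}_{m}$, so the right-hand side equals $\log\widetilde{\zeta}^{}_{M}(z)$. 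Exponentiating gives the claim.

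There is no serious obstacle; the only things to be careful with are the two classical formal-series identities above, which are routine but should be cited or justified explicitly. Everything else is bookkeeping, and the proof works uniformly for any $M\in\Mat(d,\ZZ)$ with no hyperbolicity assumption, since both sides are manipulated purely as formal power series.
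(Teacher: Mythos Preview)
Your proof is correct and follows essentially the same route as the paper: both reduce the claim to the trace formula $\det(\one-A)=\sum_{k}(-1)^{k}\trace(\ep^{k}(A))$ applied to $A=M^{m}$, together with the standard identity $-\log\det(\one-zA)=\sum_{m\ge 1}\trace(A^{m})z^{m}/m$. You are a bit more explicit about the functoriality step $\ep^{k}(M^{m})=\ep^{k}(M)^{m}$, which the paper leaves unsaid, but otherwise the arguments coincide.
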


Since all $\ep^k(M)$ are integer matrices, $\widetilde{\zeta}^{}_{M}
(z)$ is a rational function with numerator and denominator in
$\ZZ[z]$.  It can be calculated by integer arithmetic alone (many
algebraic program packages have the matrices of minors of arbitrary
order $k$ as built-in functions). Also, since $0$ is never a root of
the denominator, the series \eqref{til-series} for
$\widetilde{\zeta}^{}_{M}$ converges uniformly on sufficiently small
disks around $0$.

\begin{proof}[Proof of Proposition $\ref{pr-zeta-torus}$]
  This is analogous to \cite[Lemma~17]{Fel}: The assertion is
  immediate from the well-known formula in linear algebra
\begin{equation}
\label{trace-formula-torus}
   \det(\one -A)=\sum_{k=0}^{d} (-1)^{k}\trace(\ep^{k}(A))
\end{equation}
together with the power series identity
\begin{equation}
\label{trace-identity}
   \exp \Bigl(\sum_{m=1}^{\infty} \frac{\trace (A^{m})}{m}
   \,  z^m \Bigr) = \frac{1}{\det (\one - z A)} \ts ,
\end{equation}
which is omnipresent in connection with zeta functions of any kind (in
particular, it appears in the calculation of dynamical zeta functions
of shifts of finite type, see \cite{BL,R}).  We recall that
\eqref{trace-formula-torus} is proved by evaluating the characteristic
polynomial of $M$ at $1$, while \eqref{trace-identity} is a simple
consequence of the relation $\det \bigl( \exp (C)\bigr) = \exp\bigl(
\trace (C)\bigr)$ for square matrices $C$, together with the Taylor
series for $-\log(1-z)$, which is the case $d=1$ of
\eqref{trace-identity}.
\end{proof}

\smallskip
Let us also note that, in terms of the $d$ eigenvalues
$\lambda^{}_{1}, \ldots ,\lambda^{}_{d}\ts $ of $M$, one has the
relation $ \det \bigl( \one - z \ep^k (M) \bigr) = P^{}_{k} (z)$ with
the polynomials $P^{}_{0} (z) = 1-z$ and
\[
    P^{}_{k} (z) = \prod_{1\le \ell^{}_{1} < \ell^{}_{2} < 
   \cdots < \ell^{}_{k} \le d} (1 - z \ts \lambda^{}_{\ell^{}_{1}}\!
   \cdot\ldots\cdot \lambda^{}_{\ell^{}_{k}} )
\]
for $1\le k\le d$. 
This version is useful for the derivation of the functional
equation of $\widetilde{\zeta}^{}_{M}$.
\begin{lemma} \label{functional}
    If $M\in\Mat (d,\ZZ)$ with $D:= \det (M) \ne 0$, one has
    $\, \widetilde{\zeta}^{}_{M} (1/Dz) = B \ts \bigl( 
    \widetilde{\zeta}^{}_{M} (z) \bigr)^{(-1)^{d}}$, where
    $B=D$ for $d=1$ and $B=1$ otherwise.
\end{lemma}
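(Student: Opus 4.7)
The plan is to exploit the eigenvalue factorization $\widetilde{\zeta}^{}_{M}(z) = \prod_{k=0}^{d} P^{}_{k}(z)^{(-1)^{k+1}}$ established just before the lemma, together with the complementation duality $S \leftrightarrow S^{c}$ on $k$-element subsets of $\{1,\dots,d\}$. Writing $\lambda^{}_{S}=\prod_{i\in S}\lambda^{}_{i}$ and recalling that $\lambda^{}_{1}\cdots\lambda^{}_{d}=D$, the basic algebraic input is $\lambda^{}_{S}\ts\lambda^{}_{S^{c}}=D$ for every $k$-subset $S$.

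The first step is to compute $P^{}_{k}(1/(Dz))$ factor by factor: using $\lambda^{}_{S}/D=1/\lambda^{}_{S^{c}}$, each factor rearranges as
\[
   1-\frac{\lambda^{}_{S}}{Dz}
   \,=\, 1-\frac{1}{z\ts\lambda^{}_{S^{c}}}
   \,=\, -\,\frac{1-z\ts\lambda^{}_{S^{c}}}{z\ts\lambda^{}_{S^{c}}}\ts .
\]
Taking the product over all $\binom{d}{k}$ subsets $S$ and noting that each index $i$ belongs to $S^{c}$ for exactly $\binom{d-1}{k}$ of them, I obtain the key identity
\[
   P^{}_{k}\bigl(1/(Dz)\bigr)
   \,=\, \frac{(-1)^{\binom{d}{k}}}{z^{\binom{d}{k}}\, D^{\binom{d-1}{k}}}\;
   P^{}_{d-k}(z)\ts .
\]

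The second step is to substitute this into $\widetilde{\zeta}^{}_{M}(1/(Dz))=\prod_{k} P^{}_{k}(1/(Dz))^{(-1)^{k+1}}$ and reindex via $j=d-k$. Since $(-1)^{k+1}=(-1)^{d}(-1)^{j+1}$, the product of the $P^{}_{d-k}(z)$-factors bundles exactly into $\widetilde{\zeta}^{}_{M}(z)^{(-1)^{d}}$, so only the scalar prefactor remains to be identified with $B$.

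The third and fiddliest step is the bookkeeping of that prefactor. The overall exponent of $z$ is $\sum_{k=0}^{d} (-1)^{k}\binom{d}{k}$, which equals $0$ for $d\ge 1$ by the binomial theorem. The exponent of $D$ is $\sum_{k=0}^{d} (-1)^{k}\binom{d-1}{k}$, which equals $0$ for $d\ge 2$ and $1$ for $d=1$. Finally, because $(-1)^{\binom{d}{k}}$ raised to either $+1$ or $-1$ yields $(-1)^{\binom{d}{k}}$, the total sign is $(-1)^{\sum_{k}\binom{d}{k}}=(-1)^{2^{d}}=+1$ for $d\ge 1$. Combining the three contributions gives $B=D$ in dimension one and $B=1$ otherwise, as claimed. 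The main obstacle is nothing deep but this sign-and-exponent bookkeeping; the conceptual content is entirely captured by the subset-complementation duality in the first step.
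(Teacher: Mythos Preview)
Your proof is correct and follows essentially the same route as the paper: derive the transformation law $P_{k}(1/(Dz)) = (-1/z)^{\binom{d}{k}} D^{-\binom{d-1}{k}} P_{d-k}(z)$ (the paper packages the $D$-power as $\beta_{d-k}=D^{\binom{d-1}{d-k-1}}=D^{\binom{d-1}{k}}$), substitute into the product for $\widetilde{\zeta}^{}_{M}$, and collapse the scalar prefactor via the alternating binomial sums. Your subset-complementation bookkeeping merely makes explicit what the paper calls ``a direct calculation''.
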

\begin{proof}
First, a direct calculation shows that
\[
    P^{}_{k} \Bigl( \frac{1}{Dz} \Bigr) = \frac{1}{\beta^{}_{d-k}}
    \Bigl( \frac{-1}{z} \Bigr)^{\binom{d}{k}} P^{}_{d-k} (z)\ts ,
\]
where $\beta^{}_{0} = 1$ and $\beta^{}_{k} = D^{\binom{d-1}{k-1}}$ for
$1\le k\le d$. Note that each prefactor $\beta^{}_{k}$ involves
products of eigenvalues, but is symmetric in them and thus simplifies
to a power of the determinant.

Next, recall the binomial formula $\sum_{\ell=0}^{n} \binom{n}{\ell}
(-1)^{\ell} = \delta_{n,0}$ for $n\ge 0$, and insert the previous
polynomial identities into the product expression of
Proposition~\ref{pr-zeta-torus}.  Our claim follows, because the
prefactor that contains $z$ disappears by an application of the
binomial formula, while the prefactor with the determinants simplifies
to the factor $B$ by an analogous calculation; compare \cite[Lemma
19]{Fel} and its proof for a related argument.
\end{proof}
  
The special situation for $d=1$ is also immediate from
$\widetilde{\zeta}^{}_{n} (z) = (1-n z)/(1-z)$, as the
determinant is $n$; compare the example in the introduction.

\section{The Artin-Mazur zeta function}

To derive a formula for the dynamical zeta function, we observe that
$a_{m}= \widetilde{a}_{m} \sgn (\widetilde{a}_{m})$.  Hence the signs
of all nonzero $\widetilde{a}_{m}$ need to be determined. When $M$ is
quasihyperbolic, this is done in \cite[Lemma~2.1]{Wad}, see also the
proof of \cite[Lemma~15]{Fel}, but the argument works for general
$M\in\Mat(d,\ZZ)$, too: We employ the formula
\begin{equation}\label{eigen-1}
     \widetilde{a}_{m} = \det(\one-M^{m}) = 
     \prod_{j=1}^{d} (1 - \lambda_{j}^{m})
\end{equation}
with the $\lambda^{}_{j}$ as above. It is clear that neither complex
eigenvalues play a role (as they come in complex conjugate pairs, and
$(1 - \lambda^{m}) (1 - \bar{\lambda}^{m}) = \lvert 1 - \lambda^{m}
\rvert^2 \ge 0$), nor do eigenvalues $\lambda \in [-1,1]$ (because
then $1 - \lambda^m \ge 0$).  The remaining eigenvalues (evs) matter,
and one finds
\begin{equation}\label{signs}
    a_{m} \, = \, \widetilde{a}_{m} \,
    \bigl( (-1)^{\# \text{ real evs } < -1}\bigr)^{m}
    (-1)^{\# \text{ real evs outside } [-1,1] }
    \, =: \, \widetilde{a}_{m} \, \delta^{\ts m} \, \varepsilon .
\end{equation}

Inserting this into $\zeta^{}_{M} (z)$ and comparing with
$\widetilde{\zeta}^{}_{M} (z)$ gives
\begin{equation}\label{zeta-4}
    \zeta^{}_{M} (z) = \bigl(\ts\widetilde{\zeta}^{}_{M} 
    (\delta\ts z) \bigr)^{\varepsilon}
\end{equation}
for the dynamical zeta function of $M$.  Though \eqref{signs} involves
the eigenvalues of $M$, the signs $\delta$ and $\varepsilon$ can once
again be obtained by integer arithmetic alone.  When no eigenvalue of
$M$ is $\pm 1$, they are simply given by
\begin{equation}\label{signs-1}
   \delta = \sgn \bigl( \det (\one + M) \bigr)
   \quad \text{and} \quad
   \varepsilon = \delta  \sgn \bigl( \det (\one - M)\bigr) .
\end{equation}
In general, the signs can be defined by the one-sided limits
\[
   \delta = \lim_{\alpha\searrow \ts 0}\sgn 
   \bigl( \det ( (1+\alpha)\one + M ) \bigr)
   \quad \text{and} \quad
   \varepsilon = \delta \lim_{\alpha\searrow \ts 0} \sgn 
   \bigl( \det ((1+\alpha)\one - M)\bigr) ,
\]
which can be evaluated explicitly as follows.  Factorise $\det (x\one
- M) = (x-1)^{\sigma} (x+1)^{\tau} Q(x)$ with $Q\in\ZZ[x]$ and $Q(\pm
1)\neq 0$, where the non-negative integers $\sigma,\tau$ are
unique. This implies $\det (x\one +M) = (x-1)^{\tau} R(x)$ with $R\in
\ZZ[x]$ and $R(1)\ne 0$.  Consequently, one has
\begin{equation}\label{signs-2}
   \delta = \sgn \biggl( \frac{\det (x\one + M)}
   {(x-1)^{\tau}} \bigg|_{x=1} \biggr)
   \quad \text{and} \quad
   \varepsilon = \delta \sgn \biggl( \frac{\det (x \one - M)}
   {(x-1)^{\sigma}} \bigg|_{x=1} \biggr),
\end{equation}
which is used in our sample program in the appendix.

Let us summarise the result of our derivation so far.
\begin{theorem} \label{thm:main} 
  Consider a general toral endomorphism, represented by a matrix
  $M\in\Mat(d,\ZZ)$.  The associated Artin-Mazur zeta function,
  defined in terms of isolated fixed points, satisfies
\[
    \zeta^{}_M(z)=\prod_{k=0}^d\det \bigl(\one - \delta 
    z\ep^k(M)\bigr)^{\varepsilon\ts (-1)^{k+1}},
\]
  where the signs $\delta$ and $\varepsilon$ are given by
  Eq.~\eqref{signs-1} when $\pm 1$ is not an eigenvalue of $M$, and by
  Eq.~\eqref{signs-2} in general.  In particular, $\zeta^{}_{M} (z)$ is
  a rational function.  When no eigenvalue of $M$ is a root of unity,
  all fixed points are covered this way.  \qed
\end{theorem}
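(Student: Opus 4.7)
The plan is to assemble three ingredients already prepared in the previous section: the product formula for $\widetilde{\zeta}^{}_{M}$ from Proposition~\ref{pr-zeta-torus}, the sign identity \eqref{signs} relating $a^{}_m$ to $\widetilde{a}^{}_m$, and the eigenvalue characterisation of the signs $\delta, \varepsilon$.

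The first step is to translate \eqref{signs} into a relation between the two zeta functions. Substituting $a^{}_m = \varepsilon\ts \delta^{m}\ts \widetilde{a}^{}_m$ into \eqref{zeta-1}, one pulls $\varepsilon\in\{\pm 1\}$ out as an exponent and absorbs $\delta^{m} z^{m} = (\delta z)^{m}$ into the argument, which gives $\zeta^{}_{M}(z) = \widetilde{\zeta}^{}_{M}(\delta z)^{\varepsilon}$, i.e.\ \eqref{zeta-4}. Inserting Proposition~\ref{pr-zeta-torus} with $z$ replaced by $\delta z$ then delivers the announced product formula. Rationality is automatic, because each factor is the characteristic polynomial of the integer matrix $\ep^{k}(M)$, evaluated at $\delta z$.

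The second step is to verify that \eqref{signs-1} and \eqref{signs-2} correctly reproduce the $\delta, \varepsilon$ of \eqref{signs}. Writing $\det(\one \pm M) = \prod_{j}(1\pm\lambda^{}_{j})$, complex-conjugate pairs contribute positive real factors and real eigenvalues in $(-1,1)$ contribute positive factors; hence, provided $\pm 1$ is not an eigenvalue, $\sgn\bigl(\det(\one+M)\bigr) = (-1)^{\#\ts\text{real evs}\,<\,-1} = \delta$ and $\sgn\bigl(\det(\one-M)\bigr) = (-1)^{\#\ts\text{real evs}\,>\,1} = \delta\ts\varepsilon$, which is \eqref{signs-1}. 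In the general case, dividing $\det(x\one+M)$ by $(x-1)^{\tau}$ removes exactly the factors coming from the eigenvalues equal to $-1$, so $R(1) = \prod_{\lambda^{}_{j}\neq -1}(1+\lambda^{}_{j})$, whose sign is computed by the same recipe; the analogous statement for $\varepsilon$ follows from the factor $(x-1)^{\sigma}$.

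The final clause is immediate from \eqref{fixcount}: if no eigenvalue of $M$ is a root of unity, then $1$ is never an eigenvalue of any $M^{m}$, so $a^{}_{m} = \lvert\det(\one-M^{m})\rvert$ counts \emph{all} fixed points of $M^{m}$, not only the isolated ones. The only slightly delicate point is the claim that the one-sided limits defining $\delta, \varepsilon$ in general coincide with the algebraic ratios in \eqref{signs-2}; I would dispatch this by noting that, as $\alpha\searrow 0$, each factor $((1+\alpha)+\lambda^{}_{j})$ is either continuous and non-vanishing or, in the case $\lambda^{}_{j}=-1$, equals $\alpha>0$, whose positive sign corresponds precisely to the effect of removing the $(x-1)^{\tau}$ factor.
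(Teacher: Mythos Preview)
Your proposal is correct and follows the paper's own argument essentially line by line: the theorem in the paper is stated with a \qed\ because its proof is precisely the discussion in Section~3 preceding it, namely the derivation of \eqref{signs} and \eqref{zeta-4} combined with Proposition~\ref{pr-zeta-torus}, together with the verification of \eqref{signs-1} and \eqref{signs-2}. The only quibble is terminological: $\det\bigl(\one-\delta z\ep^{k}(M)\bigr)$ is not literally the characteristic polynomial of $\ep^{k}(M)$ but its reverse, though this is immaterial for the rationality claim.
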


For quasihyperbolic toral endomorphisms, this result follows from an
analogous result for nilmanifolds \cite[Thm.~45]{Fel}, which is proved
by Reidemeister-Nielsen fixed point theory, while the case of
hyperbolic toral automorphisms is already treated in
\cite[Prop.~4.5]{Smale} in a slightly different formulation. Theorem
\ref{thm:main} covers the special cases of automorphisms for $d=2$
from \cite{KH,DEI,BRW}.  Let us note that, in the hyperbolic case, the
rationality of $\zeta^{}_{M}$ can also be seen as a consequence of the
general rationality result \cite{Man} proved by Markov partitions. Of
related interest is the approach of \cite{Miles}, which connects the
problem to an interesting class of $\ZZ^d$-actions.

Since $\delta$ and $\varepsilon$ are signs, in particular $\delta
=1/\delta$, the functional equation for $\zeta^{}_{M}$ is now
immediate from Lemma~\ref{functional} and Theorem~\ref{thm:main}.
\begin{corollary}
    When $D:= \det (M) \ne 0$, one has $\, \zeta^{}_{M} (1/Dz) =
    B^{\varepsilon}_{} \ts \bigl( \zeta^{}_{M} (z) \bigr)^{(-1)^{d}}$
    with $\varepsilon$ from Eq.~\eqref{signs-2},
    where $B=D$ for $d=1$ and\/ $B=1$ otherwise.  \qed
\end{corollary}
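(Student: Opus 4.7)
The plan is to combine Eq.~\eqref{zeta-4}, which states $\zeta^{}_{M}(z) = \bigl(\widetilde{\zeta}^{}_{M}(\delta z)\bigr)^{\varepsilon}$, with the functional equation for $\widetilde{\zeta}^{}_{M}$ already established in Lemma~\ref{functional}. The author explicitly announces the result as ``immediate'', and indeed the essential observation is that $\delta \in \{\pm 1\}$, so $\delta = 1/\delta$; this tiny bookkeeping identity is what makes the substitution fit the hypotheses of Lemma~\ref{functional}.

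Concretely, I would first substitute $z \mapsto 1/(Dz)$ into Eq.~\eqref{zeta-4} to obtain
\[
  \zeta^{}_{M}\bigl(1/(Dz)\bigr) \, = \, \Bigl(\widetilde{\zeta}^{}_{M}\bigl(\delta/(Dz)\bigr)\Bigr)^{\varepsilon}
  \, = \, \Bigl(\widetilde{\zeta}^{}_{M}\bigl(1/(D\ts\delta z)\bigr)\Bigr)^{\varepsilon},
\]
where the second equality uses $\delta = 1/\delta$ to rewrite the argument of $\widetilde{\zeta}^{}_{M}$ in the form $1/(Dw)$ with $w := \delta z$.

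Next I would invoke Lemma~\ref{functional} with $z$ replaced by $\delta z$, which gives $\widetilde{\zeta}^{}_{M}\bigl(1/(D\delta z)\bigr) = B\,\bigl(\widetilde{\zeta}^{}_{M}(\delta z)\bigr)^{(-1)^d}$. Raising both sides to the $\varepsilon$-th power and invoking Eq.~\eqref{zeta-4} once more to identify $\bigl(\widetilde{\zeta}^{}_{M}(\delta z)\bigr)^{\varepsilon} = \zeta^{}_{M}(z)$ yields the claimed identity $\zeta^{}_{M}(1/(Dz)) = B^{\varepsilon}\bigl(\zeta^{}_{M}(z)\bigr)^{(-1)^d}$. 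Note that the two $\pm 1$--valued exponents $\varepsilon$ and $(-1)^d$ commute formally, so the order of raising to these powers is irrelevant.

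No serious obstacle is anticipated; the argument is pure bookkeeping once the two prior results are in hand. The only subtlety worth flagging is the interpretation of $B^{\varepsilon}$: for $d\ge 2$ one has $B=1$ and hence $B^{\varepsilon}=1$ trivially, while for $d=1$ we have $B=D\neq 0$ and $B^{\varepsilon}\in\{D,1/D\}$ is an unambiguous nonzero rational number, consistent with the direct calculation $\zeta^{}_{n}(z) = (1-\sgn(n)z)/(1-|n|z)$ recorded in Eq.~\eqref{dim-one}.
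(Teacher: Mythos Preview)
Your proposal is correct and follows exactly the route the paper indicates: the text just before the Corollary says the result is ``immediate from Lemma~\ref{functional} and Theorem~\ref{thm:main}'' after noting that $\delta=1/\delta$, and you have simply written out this bookkeeping (using Eq.~\eqref{zeta-4}, which is the content of Theorem~\ref{thm:main} relevant here). Nothing to add.
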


Let us also mention that
\begin{equation}\label{genfun}
   \sum_{m=1}^{\infty} a_{m} \ts z^m =
   \frac{z \, \zeta^{\ts\prime}_{M} (z)}{\zeta^{}_{M} (z)}
\end{equation}
is the ordinary power series generating function of the sequence
$(a_{m})^{}_{m\in\NN}$, which is still a rational function. Its radius
of convergence $\varrho^{}_{M}$ is always positive (it is the absolute
value of the smallest root of the denominator of \eqref{genfun} in
reduced form). Thus, when $\lim_{m\to\infty} \frac{a_{m+1}}{a_{m}}$
exists, $1/\varrho^{}_{M}$ is the asymptotic growth rate of the fixed
point counts, which provides a simple alternative to the approach in
\cite{Wad}. The limit exists precisely for hyperbolic (and hence
expansive) endomorphisms, as follows from \cite[Thm.~6.3]{CEW}. The
ratio as a growth measure is also employed in \cite[Thm.~16]{Lehmer},
where the case of unimodular roots is briefly discussed, too.

\smallskip 
Finally, we observe that the Artin-Mazur zeta function of a general
toral endomorphism can be written as an Euler product.

\begin{prop}\label{prop:euler}
   For any matrix $M\in\Mat(d,\ZZ)$, the associated Artin-Mazur
   zeta function $\zeta^{}_M(z)$ has an Euler product representation
   \eqref{zeta-1} with uniquely determined integers $c_n$. 
\end{prop}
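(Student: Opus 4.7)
The plan is to derive the Euler product as a formal identity by Möbius inversion, and then reduce the integrality of the exponents $c^{}_n$ to a classical congruence for traces of powers of integer matrices.

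First I would take the formal logarithm of $\prod_{m\ge 1}(1-z^m)^{-c^{}_m}$, use $-\log(1-z^m)=\sum_{k\ge 1}z^{mk}/k$, and match the coefficient of $z^n$ with $a^{}_n/n$ coming from \eqref{zeta-1}. This gives the linear system $a^{}_n=\sum_{\ell\mid n}\ell\,c^{}_\ell$, so that Möbius inversion uniquely determines
\[
   c^{}_n \, = \, \frac{1}{n}\sum_{\ell\mid n}\mu(n/\ell)\,a^{}_\ell
\]
as a rational number. Existence and uniqueness of the Euler product are thus immediate at the level of formal power series; the substance of the proposition is the integrality $c^{}_n\in\ZZ$.

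To obtain integrality, I would combine the sign identity $\varepsilon\ts a^{}_m=\delta^m\widetilde{a}^{}_m$ from \eqref{signs} with the trace expansion $\det(\one-A)=\sum_{k=0}^d(-1)^k\trace(\ep^k(A))$ from \eqref{trace-formula-torus} applied to $A=M^m$. Using the functoriality $\ep^k(M^m)=(\ep^k(M))^m$ and absorbing $\delta^m$ into the matrix, this yields $\varepsilon\ts a^{}_m=\sum_{k=0}^d(-1)^k\trace(B_k^m)$, where $B^{}_k:=\delta\ts\ep^k(M)\in\Mat\bigl(\binom{d}{k},\ZZ\bigr)$ is an integer matrix because $\delta=\pm 1$. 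Consequently,
\[
   \varepsilon\ts n\, c^{}_n \, = \,
   \sum_{k=0}^{d}(-1)^k\sum_{\ell\mid n}\mu(n/\ell)\,\trace(B_k^\ell),
\]
reducing the claim to showing that each inner Möbius sum is divisible by $n$.

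The remaining ingredient, which is the main obstacle, is the classical Gauss congruence: $\sum_{\ell\mid n}\mu(n/\ell)\,\trace(B^\ell)\equiv 0\pmod{n}$ for every integer matrix $B$ and every $n\ge 1$. I would invoke this as a standard number-theoretic fact, whose proof reduces via the Chinese remainder theorem to prime powers and can then be completed either by induction using Newton's identities applied to the characteristic polynomial of $B$, or combinatorially by interpreting $\trace(B^\ell)$ as a signed count of closed walks of length $\ell$ in the weighted digraph of $B$ and partitioning these walks into cyclic orbits. Granted this congruence, the displayed right-hand side is divisible by $n$, whence $\varepsilon\ts c^{}_n\in\ZZ$, and since $\varepsilon\in\{\pm 1\}$ we conclude $c^{}_n\in\ZZ$.
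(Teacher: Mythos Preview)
Your argument is correct, and it takes a genuinely different route from the paper's proof. The paper proceeds by a \emph{deformation argument}: it first observes that in the quasihyperbolic case the $c^{}_n$ are non-negative integers by their geometric interpretation as cycle counts, and then, for general $M$, perturbs to a hyperbolic matrix $M' = M + \nu^{r} N$ (with $N$ a carefully chosen diagonal matrix and $\nu$ a common denominator of the rational coefficients of the polynomial $P$ expressing $c^{}_n$ in the entries of $M$) so that $M'$ shares the sign $\delta$ with $M$; integrality of $c^{}_n$ then follows because $P(M)-P(M')\in\ZZ$ while $c'^{}_n=\varepsilon' P(M')\in\ZZ$. Your approach instead extracts integrality directly from the structural formula of Theorem~\ref{thm:main}: writing $\varepsilon\ts a^{}_m=\sum_k(-1)^k\trace(B_k^m)$ with $B_k=\delta\ts\ep^k(M)\in\Mat\bigl(\binom{d}{k},\ZZ\bigr)$ reduces everything to the classical Gauss congruence $\sum_{\ell\mid n}\mu(n/\ell)\,\trace(B^\ell)\equiv 0\pmod n$ for integer matrices. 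What your approach buys is a purely algebraic, self-contained argument that does not invoke the dynamical interpretation of the $c^{}_n$ in any special case and makes transparent that integrality is a consequence of $\zeta^{}_M$ being an alternating product of characteristic polynomials of integer matrices; what the paper's approach buys is that it avoids citing (or proving) the Gauss congruence, relying only on the elementary geometric fact that orbit counts are integers.
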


\begin{proof}
  The Euler product representation \eqref{zeta-1} is equivalent to the
  relations \eqref{mobius}. These define rational numbers $c_n$ which
  we must show to be integers.  When $M$ is quasihyperbolic, this is
  true by their geometric interpretation as cycle numbers.  The
  general case (including $d=1$, which also follows from
  \eqref{dim-one}) can be proved by a deformation
  argument as follows.

  Fix $n$ and recall that $c_n$ is linear in the $a_k$, while
  $a_k=\delta^k \ts \varepsilon\det(\one-M^k)$ from \eqref{signs} with
  the signs $\delta$ and $\varepsilon$.  Hence, $c_n$ is a polynomial
  with rational coefficients in the entries of $M$, which can be
  written as $c_n=\varepsilon \ts P(M)$.  Here, $P$ itself depends on
  $\delta$, but neither on $\varepsilon$ nor on $M$.  For another
  matrix $M^{\ts\prime}$, let $\delta^{\ts\prime}$,
  $\varepsilon^{\ts\prime}$ and $c_{n}^{\ts\prime}$ denote the
  associated signs and numbers.  When $\delta^{\ts\prime}=\delta$, we
  thus have $c^{\ts\prime}_{n}=\varepsilon^{\ts\prime}\ts
  P(M^{\ts\prime})$.

  Let $\nu > 1$ be a common denominator of all coefficients of $P$
  and define the diagonal $d\!\times\!d$-matrix
  $N=\diag(2\delta,3,4,\ldots)$, which shares the sign $\delta$
  with $M$.  Consider now $M^{\ts\prime} = M+\nu^{r} N= \nu^{r}
  (N+\nu^{-r}M)$.  Since $N$ has distinct real eigenvalues, the
  eigenvalues of $N+\nu^{-r}M$, for sufficiently large $r$, are real
  and close to those of $N$. Then, $M^{\ts\prime}$ is hyperbolic with
  $\delta^{\ts\prime} = \delta$ by construction; in particular,
  $c^{\ts\prime}_{n}=\varepsilon^{\ts\prime} P(M^{\ts\prime})$ is
  integral. Since the difference $P(M)-P(M^{\ts\prime})$ is integral
  as soon as $r\geq 1$, it follows that $c_n=\varepsilon \ts P(M)$ is
  integral, too.
\end{proof}

\section{Interpretation as Lefschetz zeta function}
\label{sec:lef}

Suppose that $X$ is a compact differentiable manifold, assumed
orientable for simplicity, and $f:\, X\longrightarrow X$ is some
differentiable map.  In this situation (and also more generally),
there is a fixed point index $I_f\in\ZZ$ which satisfies the Lefschetz
trace formula
\begin{equation}
\label{lefschetz-trace-formula}
    I_f=\sum_k(-1)^k\trace\bigl(f_*|H_k(X;\QQ)\bigr),
\end{equation}
see \cite[Prop.~VII.6.6]{Dold} or \cite[Thm.~12.9]{Bredon}. Here,
$H_k(X;\QQ)$ denotes singular homology with coefficients in $\QQ$.  It
is a finite-dimensional $\QQ$-vector space in our situation, on which
$f$ acts by functoriality.

When all fixed points of $f$ are isolated, we have
$I_f=\sum_{x\in\fix(f)}i_f(x)$, where $i_f(x)\in\ZZ$ is the local
index of $f$ at $x$. If $x$ is a regular fixed point, meaning that $1$
is not an eigenvalue of the tangential map $T_x(f)$, the local index
is given by
\[
   i_f(x)=\sgn\bigl(\det(\one-T_x(f))\bigr) \, \in \, \{\pm 1\}.
\]

The Lefschetz zeta function associated to $f$ can be defined as
\[
   \zeta^L_f(z)=\exp\Bigl(\sum_{n\geq 1}\frac{z^n}{n}\, I_{f^n}\Bigr).
\]
This definition seems to appear first in \cite{Smale}; see also
\cite{Fel}.  By using the identity \eqref{trace-identity}, the trace
formula \eqref{lefschetz-trace-formula} applied to all iterates of $f$
implies that $\zeta^L_f (z)$ is a rational function,
\begin{equation}
\label{lefschetz-zeta-formula}
    \zeta^L_f (z)=\prod_{k}
   \det(\one-zf_*|H_{k}(X;\QQ))^{(-1)^{k+1}}.
\end{equation}

Let us now assume that $X=\TT^d=\RR^d/\ZZ^d$ as above, and that $f$ is
given by an arbitrary $M\in\Mat(d,\ZZ)$.  In this case, with the zeta
function $\widetilde{\zeta}^{}_{M}$ of
Proposition~\ref{pr-zeta-torus}, we have
\begin{equation} \label{rel-equals-lef}
  \widetilde\zeta^{}_{M}(z)=\zeta^L_{f}(z)\ts ,
\end{equation}
including a correspondence of all related formulas (for the case of
hyperbolic toral automorphisms, this was noted in \cite[p.\ 86, lines
7 and 16]{Fel}). Let us sketch a possible line of argument.  First, it
is well-known that the K\"unneth formula \cite[Thm.~3.2]{Bredon} gives
an isomorphism
\begin{equation}
\label{homology-torus}
    H_{k}(X;\QQ)\cong\ep^{k}(\QQ^d)
\end{equation}
such that the action of $f$ on $H_k$ corresponds to $\ep^k(M)$.  This
identifies the right hand sides of \eqref{trace-formula-torus} and
\eqref{lefschetz-trace-formula}, and similarly for
Proposition~\ref{pr-zeta-torus} and
Eq.~\eqref{lefschetz-zeta-formula}.  It follows that the left hand
sides of the corresponding pairs of equations are equal as well, that
is
\begin{equation}
\label{fixpoint-index-torus}
    I_f=\det(\one-M) , 
\end{equation}
and similarly $I_{f^n}=\det(\one-M^n)$; this appears also in
\cite[Prop.~4.15]{Smale} and in \cite{BBPT}.

A direct proof of \eqref{fixpoint-index-torus} without using the
Lefschetz trace formula can be done as follows.  Assume first that $1$
is not an eigenvalue of $M$. All fixed points $x$ of $f$ are then
regular, with the same local index
$i_f(x)=\sgn\bigl(\det(\one-M)\bigr)$, because the tangent space
$T_x(X)$ can be identified with $\RR^d$, where the action of $f$ is
given by $M$. Thus \eqref{fixpoint-index-torus} is immediate.  When no
eigenvalue of $M$ is a root of unity, the same applies to all iterates
of $f$.

For arbitrary $M$, we may use the following lemma.
\begin{lemma} \label{le-index} 
  Assume $X$ to be a compact Lie group of dimension $d$ and $f \! : \,
  X\to X$ a differentiable map.  Let $g(x)=x\cdot f(x)^{-1}$.  Then,
  $g$ acts on the $1$-dimensional\/ $\QQ$-vector space $H_d(X;\QQ)$ 
  by the scalar $I_f$.
\end{lemma}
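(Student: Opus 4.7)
The plan is to reduce the lemma to the identity $\deg(g) = I_f$, where $\deg(g) \in \ZZ$ is the degree of $g$ as a self-map of the compact orientable $d$-manifold $X$. Since any continuous self-map of $X$ acts on $H_d(X;\QQ)$ by multiplication by its degree, and a compact connected Lie group is parallelizable and hence orientable, this reduction is available. Both $\deg(g)$ and $I_f$ depend only on the homotopy class of $f$, so I would first use transversality of the graph of $f$ against the diagonal in $X \times X$ to deform $f$ to a homotopic map whose fixed points are isolated and regular, i.e.\ such that $1$ is not an eigenvalue of $T_x f$ at any $x \in \fix(f)$. For such $f$, the identity element $e \in X$ is a regular value of $g$ with $g^{-1}(e) = \fix(f)$, and $\deg(g)$ is the sum of the local degrees of $g$ at these points.

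The heart of the argument is the local calculation at each $x_0 \in \fix(f)$. I would parametrize a neighborhood of $x_0$ via right translation: set $G(h) := g(h x_0) = h \cdot F(h)^{-1}$ with $F(h) := f(h x_0) x_0^{-1}$. Then $F(e) = e$, and $T_e F$ is conjugate to $T_{x_0} f$ under the right-translation trivialization $T_e R_{x_0} \colon T_e X \to T_{x_0} X$. Applying the Leibniz rule at $h = e$, using that the differential of group multiplication at $(e,e)$ is addition and that of inversion at $e$ is $-\mathrm{id}$, gives $T_e G = \mathrm{id} - T_e F$. In particular $\det(T_e G) = \det(\mathrm{id} - T_{x_0} f)$, and if we orient $X$ by transporting a fixed orientation of $T_e X$ via right translation, the local degree of $g$ at $x_0$ is $\sgn \det(\mathrm{id} - T_{x_0} f) = i_f(x_0)$.

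Summing the local degrees then yields $\deg(g) = \sum_{x_0 \in \fix(f)} i_f(x_0) = I_f$, proving the lemma. The main subtlety is the orientation bookkeeping: one must verify that the local degrees of $g$ are equal to, and not merely off by a global sign from, the local indices of $f$. Once this is set up, the transversality reduction and the Leibniz computation are routine, and the homotopy invariance of both sides allows the regularity assumption to be removed at the end.
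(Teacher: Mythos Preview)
Your argument is correct and complete: the reduction to $\deg(g)=I_f$, the homotopy invariance step, the transversality perturbation, and the local computation $T_eG=\mathrm{id}-T_eF$ all go through as you describe, and your orientation check via the right-invariant framing is exactly the point that pins down the sign.

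The paper takes a genuinely different route. Rather than perturbing $f$ and summing local contributions, it works entirely in homology: it uses the definition of $I_f$ as the intersection product $[\Gamma_f]\cdot[\Delta]$ in $X\times X$, applies the diffeomorphism $(x,y)\mapsto(x,\,x\,y^{-1})$ (which carries $\Gamma_f$ to $\Gamma_g$ and $\Delta$ to $X\times\{e\}$ and changes orientation by $(-1)^d$), and then extracts $\deg(g)$ by decomposing $[\Gamma_g]$ via the K\"unneth formula. The $(-1)^d$ from the automorphism cancels against the $(-1)^d$ in the intersection of a graph with a horizontal slice, yielding $I_f=\deg(g)$. So the paper's orientation bookkeeping happens once, globally, through the effect of the automorphism on the fundamental class, whereas yours happens locally at each fixed point via the right-translation trivialisation. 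Your approach is more elementary in that it avoids the homological intersection product and the K\"unneth analysis of $[\Gamma_g]$, at the cost of the transversality/homotopy-invariance step; the paper's approach is perturbation-free and stays at the level of homology classes throughout.
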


Granting the lemma, \eqref{fixpoint-index-torus} follows easily
because, in the torus case, $g$ is given by $\one-M$, and we have
$H_d(X;\QQ)\cong\ep^d(\QQ^d)$.  We only sketch the proof of Lemma
\ref{le-index} and leave the details to the reader.  The fixed point
index can be defined as the homology intersection product in
$X\!\times\! X$ of the graph of $f$ and the diagonal,
$I_f=[\Gamma_X]\cdot[\Delta]$.  Since the automorphism of $X\times X$
given by $(x,y)\mapsto(x,x\cdot y^{-1})$ acts on the orientation by
$(-1)^{d}$, we get $I_f=(-1)^d[\Gamma_g]\cdot[X\times\{1\}]$.  The
assertion follows by a straightforward computation based on
decomposing $[\Gamma_g]$ according to the K\"unneth formula for
$X\!\times\! X$.

\bigskip
\bigskip
\section*{Appendix:\ A sample program for calculating $\zeta^{}_{M}$}

One can implement the explicit zeta function formulas of
Proposition~\ref{pr-zeta-torus} and Theorem~\ref{thm:main} in a simple
\textsc{Mathematica}${}^{\!\mbox{\tiny\textregistered}}$ program as
follows.

\smallskip 
\indent\indent\textsf{Clear[tilzeta,zeta,ord,sig,tau,del,eps,dim,one];} \\
\indent\indent\textsf{tilzeta[mat\_]:=(dim=Length[mat];
             Factor[Product[Det[ }\\
\indent\indent\indent \textsf{IdentityMatrix[Binomial[dim,k]] $-$ z 
             Minors[mat,k]]\^{}{((-1)\^{}(k+1))},$\{$k,0,dim$\}$]]);}\\
\indent\indent\textsf{ord[pol\_\ts, x\_]:=(tmp=pol; i=0;
    While[(tmp /. z$\ts\to\ts$x)==0, (i ++; tmp=D[tmp,z])]; i);}\\
\indent\indent\textsf{zeta[mat\_]:=(one=IdentityMatrix[Length[mat]];
             pol=Det[z one $-$ mat];}\\
\indent\indent\indent \textsf{sig=ord[pol,1]; tau=ord[pol, $-$ 1];}\\
\indent\indent\indent \textsf{del=Sign[Factor[Det[z one + 
             mat]/(z$-$1)\^{}tau] /. z$\ts\to\ts$1];}\\
\indent\indent\indent \textsf{eps=del Sign[Factor[pol/(z$-$1)\^{}sig] /. 
             z$\ts\to\ts$1];}\\
\indent\indent\indent \textsf{Factor[(tilzeta[mat] /. 
             z$\ts\to\ts\ts$del z)\^{}eps]);}

\smallskip\noindent 
The input is an integer matrix, in the standard format of a double
list.  The calculation is exact and reasonably fast for small
dimensions, and can be used up to dimension $8$ or $10$ say.

\bigskip
\bigskip
\section*{Acknowledgements}

It is a pleasure to thank Alex Fel'shtyn and Tom Ward for various
helpful suggestions and Doug Lind, John A.G.\ Roberts and Rudolf
Scharlau for discussions.  This work was supported by the German
Research Council (DFG), within the CRC 701.

\newpage


\begin{thebibliography}{99}
\small

\bibitem{AA}
V.\thinspace I.~Arnold and A.~Avez,
\textit{Ergodic Problems of Classical Mechanics},
reprint, Addison-Wesley, Redwood City, CA (1989).

\bibitem{AP}
R.\thinspace L.~Adler and R.~Palais,
Homeomorphic conjugacy of automorphisms of the torus,
\textit{Proc.\ AMS} \textbf{16} (1965) 1222--1225.

\bibitem{AM}
M.~Artin and B.~Mazur,
On periodic points,
\textit{Ann.\ Math.} \textbf{81} (1965) 82--99.

\bibitem{BHP}
M.~Baake, J.~Hermisson and P.\thinspace A.\thinspace B.~Pleasants,
The torus parametrization of quasiperiodic LI-classes,
\textit{J.\ Phys.~A:\ Math.\ Gen.} \textbf{30} (1997) 3029--3056;
\texttt{mp\_arc/02-168}.

\bibitem{BRtorus}
M.\ Baake and J.\thinspace A.\thinspace G.\ Roberts,
Symmetries and reversing symmetries of toral automorphisms,
\textit{Nonlinearity} \textbf{14} (2001) R1--R24; 
\texttt{arXiv:math.DS/0006092}.

\bibitem{BRW}
M.\ Baake, J.\thinspace A.\thinspace G.\ Roberts and A.\ Weiss,
Periodic orbits of linear endomorphisms on the $2$-torus
and its lattices, \textit{Nonlinearity} \textbf{21} (2008)
2427--2446; \texttt{arXiv:0808.3489}.

\bibitem{BL}
R.~Bowen and O.\thinspace E.~Lanford III,
Zeta functions of restrictions of the shift transformation,
in:\ \textit{Global Analysis}, vol.\ 14 of
\textit{Proc.\ Sympos.\ Pure Math.}, 
AMS, Providence, RI (1970), pp.~43--49.

\bibitem{Bredon}
G.~Bredon,
\textit{Topology and Geometry},
Springer, Berlin (1993).

\bibitem{BBPT}
R.B.S.~Brooks, R.F.~Brown, J.~Pak and D.H.~Taylor,
Nielsen numbers of maps of tori,
\textit{Proc.\ AMS} \textbf{52} (1975) 398--400.

\bibitem{CEW}
V.~Chothi, G.~Everest and T.~Ward,
S-integer dynamical systems:\ periodic points,
\textit{J.\ Reine Angew.\ Math.\ (Crelle)}
\textbf{489} (1997) 99--132.

\bibitem{DEI}
M.~Degli Esposti and S.~Isola,
Distribution of closed orbits for linear automorphisms of tori,
\textit{Nonlinearity} \textbf{8} (1995) 827--842.

\bibitem{Dold}
A.~Dold,
\textit{Lectures on Algebraic Topology},
Springer, Berlin (1972).

\bibitem{Fel}
A.~Fel'shtyn,
\textit{Dynamical Zeta Functions, Nielsen Theory and
Reidemeister Torsion},
Memoirs AMS vol.\ 147, no.\ 699, AMS, Providence, RI (2000).

\bibitem{Gant}
F.~Gantmacher,
\textit{The Theory of Matrices},
Chelsea, New York (1959).

\bibitem{KH}
A.~Katok and B.~Hasselblatt,
\textit{Introduction to the Modern Theory of Dynamical Systems},
Cambridge University Press, Cambridge (1995).

\bibitem{Lehmer}
D.\thinspace H.\ Lehmer,
Factorization of certain cyclotomic functions,
\textit{Ann.\ Math.} (2) \textbf{34} (1933) 461--479.

\bibitem{Man}
A.\ Manning,
Axiom $A$ diffeomorphisms have rational zeta functions,
\textit{Bull.\ London Math.\ Soc.} \textbf{3} (1971) 215--220.

\bibitem{Miles}
R.\ Miles,
Zeta functions for elements of entropy rank one actions,
\textit{Ergod.\ Th.\ \& Dynam.\ Syst.} \textbf{27} (2007) 567--582.

\bibitem{PW}
Y.~Puri and T.~Ward,
Arithmetic and growth of periodic orbits,
\textit{J.\ Integer Sequences} 
\textbf{4} (2001), paper 01.2.1.

\bibitem{R}
D.~Ruelle,
\textit{Dynamical Zeta Functions for Piecewise Monotone Maps
of the Interval}, CRM Monograph Series, vol.\ 4,
AMS, Providence, RI (1994).

\bibitem{R2}
D.~Ruelle,
Dynamical zeta functions and transfer operators,
preprint IHES/M/02/66 (2002).

\bibitem{Smale}
S.\ Smale,
Differentiable dynamical systems,
\textit{Bull.\ AMS} \textbf{73} (1967) 747--817.

\bibitem{Wad}
S.~Waddington,
The prime orbit theorem for quasihyperbolic toral automorphisms,
\textit{Monatsh.\ Math.} \textbf{112} (1991) 235--248.

\bibitem{W}
P.~Walters,
\textit{An Introduction to Ergodic Theory},
reprint, Springer, New York (2000).

\end{thebibliography}
\end{document}